\newtheorem{thm}{Theorem}[section]
\newtheorem{lem}[thm]{Lemma}
\newtheorem{defi}[thm]{Definition}
\newtheorem{re}[thm]{Result}
\newtheorem{question}[thm]{Question}
\newtheorem{example}[thm]{Example}
\newtheorem{remark}[thm]{Remark}
\DeclareMathOperator{\ord}{ {\rm ord} }
\DeclareMathOperator{\Tr}{ {\rm Tr} }
\DeclareMathOperator{\wt}{ {\rm wt} }
\begin{document} 
\title{Non-projective cyclic codes whose check polynomial contains two zeros} 
\author{Tai Do Duc\\ Division of Mathematical Sciences\\
School of Physical \& Mathematical Sciences\\
Nanyang Technological University\\
Singapore 637371\\
Republic of Singapore}
\maketitle

 \begin{abstract}
 Let $n\geq 3$ be a positive integer and let $\mathbb{F}_{q^k}$ be the splitting field of $x^n-1$. By $\gamma$ we denote a primitive element of $\mathbb{F}_{q^k}$. Let $C$ be a cyclic code of length $n$ whose check polynomial contains two zeros $\gamma^d$ and $\gamma^{d+D}$, where $de \mid (q-1)$, $e>1$ and $D=(q^k-1)/e$. This family of cyclic codes is not projective. The authors in \cite{Ding, Ma, Wang, Xiong} study the weight distribution of these codes for certain parameters. In this paper, we prove that these codes are never two-weight codes.
 \end{abstract}


\section{Introduction}

A linear code is called \textbf{projective} if its dual code has weight at least $3$. 
We call a linear code \textbf{non-projective} if its dual code contains a word of weight at most $2$.
 A cyclic code is irreducible if its check polynomial is irreducible. More details about cyclic codes can be found in \cite{Lint}. 
The class of two-weight cyclic codes has been studied intensively by many authors \cite{Ding, Feng, Ma, SW, Vega1, Vega2, Wang, Xiong}.
 
\medskip

Two-weight irreducible cyclic codes were completely classified by Schmidt and White, see \cite{SW}. 
They gave necessary and sufficient conditions for the existence of these codes. 
Moreover, the nonzero weights are also explicitly described.
It remains of interest to classify all two-weight cyclic codes which are not irreducible. 
In this direction, Wolfmann \cite{Wolfmann} proved that if a two-weight projective cyclic code is not irreducible, then it is the direct sum of two one-weight irreducible cyclic subcodes of the same dimension. 
Later, Vega \cite{Vega1} and Feng \cite{Feng} complete the classification by giving necessary and sufficient conditions for these codes to be direct sum of two one-weight irreducible cyclic subcodes of the same dimension.
Nevertheless, the non-projective case remains open. 

The authors in  \cite{Ding}, \cite{Ma}, \cite{Wang}, \cite{Xiong} studied the weight distributions of cyclic codes of various parameters. All these codes are not projective codes and not two-weight codes. The studied parameters belong to a bigger family of codes whose description was given by Feng in the concluding remarks in \cite{Feng}. It is the purpose of this paper to prove that these codes are non-projective and never two-weight.

\begin{thm}  \label{main}
Let $n\geq 3$ be a positive integer. Let $q$ be a prime power and let $\mathbb{F}_{q^k}$ be the splitting field of $x^n-1$. Let $\gamma$ denote a primitive element of $\mathbb{F}_{q^k}$. Let $C$ be the cyclic code of length $n$ over $\mathbb{F}_q$ whose check polynomial is the minimal polynomial over $\mathbb{F}_q$ containing two zeros $\gamma^d$ and $\gamma^{d+D}$ in which $\gamma$ is a primitive element of $\mathbb{F}_{q^k}$ in which
$$de \mid (q-1), \ e>1, \ D=\frac{q^k-1}{e}.$$
Then the code $C$ is non-projective and $C$ is not a two-weight code. 
\end{thm}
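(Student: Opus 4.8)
The plan is to exploit the hypothesis $de\mid(q-1)$ through a single structural observation. Since $e\mid(q-1)$, the element $\zeta:=\gamma^{D}=\gamma^{(q^{k}-1)/e}$ already lies in $\F_q^{*}$ and is a primitive $e$-th root of unity, so the second zero is $\gamma^{d+D}=\zeta\,\gamma^{d}$, an $\F_q$-multiple of the first. Writing $m$ for the common degree over $\F_q$ of the two zeros and using the standard trace description of a cyclic code from its check polynomial, every codeword takes the form $c(a,b)=\big(\Tr_{\F_{q^{m}}/\F_q}(a\gamma^{dj}+b\gamma^{(d+D)j})\big)_{j=0}^{n-1}$ with $a,b\in\F_{q^{m}}$, and because $\zeta^{j}\in\F_q$ this collapses to $c(a,b)_{j}=\Tr_{\F_{q^{m}}/\F_q}\big((a+b\zeta^{j})\gamma^{dj}\big)$. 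Thus $C$ is an interleaving, over the residue classes $j\bmod e$, of (rotations of) a single irreducible cyclic code; this identity is the engine for both claims.

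First I would settle non-projectivity by producing two $\F_q$-proportional columns of a generator matrix, equivalently a word of weight at most $2$ in $C^{\perp}$. Two coordinates $j_{1},j_{2}$ give proportional columns $(\gamma^{dj_{1}},\zeta^{j_{1}}\gamma^{dj_{1}})$ and $(\gamma^{dj_{2}},\zeta^{j_{2}}\gamma^{dj_{2}})$ exactly when $e\mid(j_{1}-j_{2})$ and $\gamma^{d(j_{1}-j_{2})}\in\F_q^{*}$. The key arithmetic point is that $de\mid(q-1)$ forces $(q-1)/d$ to be a multiple of $e$, so the order $P=(q^{k}-1)/d$ of $\gamma^{d}$ is a multiple of $e$; consequently the coordinates $j$ and $j+P$ yield identical columns whenever $P<n$, giving an explicit weight-two dual word. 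Since $P\mid n$, the only configuration needing separate attention is $P=n$, i.e.\ $\gamma^{d}$ a primitive $n$-th root of unity, which I would handle by the sharper proportionality test: comparing $\mathrm{lcm}(e,N/\gcd(N,d))$ with $n$, where $N=(q^{k}-1)/(q-1)$. This boundary case is where the combinatorics is most delicate and where the standing hypotheses must be used carefully.

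The substance of the theorem is that $C$ is never two-weight, and here the plan is to turn the interleaving identity into a weight formula and exhibit three distinct nonzero weights. Writing $\delta=\gamma^{de}$ and $L=(q^{k}-1)/(de)$, the coordinates with $j\equiv r\pmod e$ run through an $a$-fold repetition of the irreducible cyclic code $\mathcal I$ of length $L$ generated by $\delta$, with coefficient $(a+b\zeta^{r})\gamma^{dr}$; hence $\wt(c(a,b))=a\sum_{r=0}^{e-1} g\big((a+b\zeta^{r})\gamma^{dr}\big)$, where $g$ is the weight function of $\mathcal I$, with $g(x)=0\iff x=0$ and $g$ constant on $\F_q^{*}$-cosets. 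Because $de\mid(q-1)$, the code $\mathcal I$ sits in the semiprimitive regime, so $g$ is explicitly computable by Gauss sums and takes very few values. I would then evaluate the formula at three targeted choices: $c(a,0)$, with all $e$ classes carrying a nonzero coefficient; a codeword with $b\ne0$ and $a=-\zeta^{r_{0}}b$, annihilating exactly one class; and a generic mixed codeword. As a safeguard I would also run the contrapositive: if $C$ were two-weight with weights $w_{1}<w_{2}$, then its irreducible cyclic subcode $C_{\infty}=\{(\Tr(a\gamma^{dj}))_{j}\}$ would have all nonzero weights in $\{w_{1},w_{2}\}$, so by the Schmidt--White classification $\mathcal I$ is a one- or two-weight irreducible cyclic code and its weights are pinned down; feeding these back into the weight formula for a mixed codeword should force a value outside $\{w_{1},w_{2}\}$.

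The main obstacle is precisely this last step: controlling $\sum_{r} g((a+b\zeta^{r})\gamma^{dr})$ tightly enough to guarantee a genuine third weight. The difficulty is that the rotations $\gamma^{dr}$ move the arguments between different $\F_q^{*}\langle\delta\rangle$-cosets, so the $e$ summands need not all equal the generic value of $g$. I expect most of the work to be the explicit Gauss-sum evaluation under $de\mid(q-1)$, separating cases according to whether $\gamma^{d}$ is a primitive $n$-th root of unity and whether the two cyclotomic cosets of $\gamma^{d}$ and $\gamma^{d+D}$ coincide, and then verifying in each case that the three exhibited weights are pairwise distinct.
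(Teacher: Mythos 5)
Your opening identity is correct and attractive: since $e\mid(q-1)$, the element $\zeta=\gamma^{D}$ lies in $\F_q^{*}$ and has order $e$, so $c(a,b)_j=\Tr\bigl((a+b\zeta^{j})\gamma^{dj}\bigr)$ and $C$ interleaves $e$ shifted copies of a single irreducible cyclic code. This is genuinely different from the paper, which never uses this factorization: the paper runs everything through the MacWilliams power moments (an exact count of the weight-$2$ words $B_2$ of $C^{\perp}$), Wolfmann's theorem, and the Schmidt--White description of the weights of two-weight irreducible cyclic codes, ultimately extracting the divisibility $q^{2k-2}\mid w_1w_2$ and refuting it. However, both halves of your plan have genuine gaps. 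For non-projectivity, your proportionality criterion is exactly the paper's: columns $j_1,j_2$ are $\F_q$-proportional iff $\operatorname{lcm}\bigl(e,\,N/\gcd(N,d)\bigr)\mid(j_1-j_2)$, where $N=(q^{k}-1)/(q-1)$, and this lcm always divides $n$. In the boundary case where the lcm \emph{equals} $n$ (equivalently $de=q-1$ and $\lambda=f=1$ in the paper's notation) there are no proportional columns at all, hence no weight-$2$ dual word exists, and no ``sharper proportionality test'' can manufacture one; an entirely different argument is required (the paper spends the second half of Lemma \ref{C is not projective} excluding these parameters via the power moments applied to $C_d$). You offer no argument for this case.

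The more serious gap is in the two-weight half. Since $\zeta^{0},\dots,\zeta^{e-1}$ are distinct, at most one residue $r$ can satisfy $a+b\zeta^{r}=0$, so the number of classes carrying a nonzero coefficient is always $e$ or $e-1$. Hence, whenever the weight function $g$ of the underlying irreducible code $\mathcal I$ is constant on nonzero arguments and insensitive to the rotations $\gamma^{dr}$, your formula produces \emph{exactly two} nonzero weights, $\lambda g_0(e-1)$ and $\lambda g_0 e$, and no choice of codeword yields a third. This situation does occur within the hypotheses of the theorem: for $q=4$, $k=2$, $n=15$, $d=1$, $e=3$, $D=5$, each class of five positions contributes $4$ or $0$ to the weight, and one checks directly that the resulting code has precisely the weights $8$ and $12$ (and, being in the boundary case above, also has no dual word of weight $\le 2$). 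So the step ``evaluate at three targeted codewords and exhibit three distinct weights'' fails as stated, and the Gauss-sum analysis you defer is not a routine verification but the entire content of the problem: you must control how $g$ varies across the cosets $\gamma^{dr}\langle\gamma^{de}\rangle\F_q^{*}$, isolate the one-weight (semiprimitive) regime for $\mathcal I$, and reconcile what happens there with the claimed conclusion before the proposal can count as a proof.
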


\medskip

\section{Structure of the Code C}

In this section, we study the structure of the code $C$ described in Theorem \ref{main} and provide necessary tools for the proof of Theorem \ref{main}. First, we fix some notations and state basic definitions of cyclic codes.

Let $m$ and $n$ be coprime integers. By $\ord_n(m)$ we denote the smallest positive integer $k$ such that $m^k\equiv 1 \pmod{n}$.

\begin{defi}
Let $h(x)$ be an irreducible divisor of $x^n-1$ over $\mathbb{F}_q$, where $(q,n)=1$. The cyclic code $W$ of length $n$ over $\mathbb{F}_q$ with check polynomial $h(x)$ is called an \textbf{irreducible} cyclic code. \\
Moreover, let $\mathbb{F}_{q^k}$ be the splitting field of $x^n-1$ over $\mathbb{F}_q$ (note that $k=\ord_n(q)$). Let $\alpha$ be a root of $f(x)$ and put $\delta=\alpha^{-1}$. By $\Tr$ we denote the trace of $\mathbb{F}_{q^k}$ over $\mathbb{F}_q$. Then the code $W$ consists of the following words.
$$c_w=\left(\Tr(w), \Tr(w\delta),\dots, \Tr(w\delta^{n-1})\right), \ w\in \mathbb{F}_{q^k}.$$
\end{defi}

\medskip

The main tools used in the proof of Theorem \ref{main} is MacWilliams identities \cite{mac} and the results by Schmidt and White \cite{SW}. While MacWilliams gives relation between the weights of a linear code, Schmidt and White give an explicit description for the weights of a two-weight irreducible cyclic codes. The following result is taken from \cite[Lemma 2.2]{mac}.

\begin{re}
Let $W$ be an $[n,m]$ linear code over $\mathbb{F}_q$. Let $W^\perp$ denote the dual code of $W$. For each $i=0,\dots,n$, let $C_i (B_i)$ denote the number of words in $W (W^\perp)$ which have weight $i$. Then
\begin{equation} \label{weight}
\sum_{i=0}^n C_i {n-i \choose v}=q^{m-v} \sum_{i=0}^n B_i {n-i \choose n-v} \ \ \text{for} \ \ v=0,1,\dots,n-1.
\end{equation}
\end{re}

Let $w_1,\dots,w_N$ be all the nonzero weights in the code $W$ and let $A_i$ be the numbers of words of weight $w_i$ in $W$. Letting $v=0,1,2$ in (\ref{weight}), we obtain the following three identities which will be useful later.

\begin{re} \label{weight relation}
Under the above notations, we have 
\begin{itemize}
\item[(1)] $\sum_{i=1}^N A_i=q^m-1$.
\item[(2)] $\sum_{i=1}^N w_iA_i=(n(q-1)-B_1)q^{m-1}$.
\item[(3)] $\sum_{i=1}^N w_i^2A_i=[n^2(q-1)^2+n(q-1)-B_1(q+2(n-1)(q-1))+2B_2]q^{m-2}$.
\end{itemize}
\end{re}

\medskip

Next, we give a description for the code $C$ in Theorem \ref{main}. From now on, we always fix a prime power $q$ and positive integers $n,k,d,e,D$ with the properties $n\geq 3$, $k=\ord_n(q)$ and
\begin{equation} \label{relation}
de \mid (q-1), \ e>1, \ D=\frac{q^k-1}{e}.
\end{equation}
Fix $\gamma$ as a primitive element of $\mathbb{F}_{q^k}$. By $C$ we denote the cyclic code of length $n$ whose check polynomial is the minimal polynomial over $\mathbb{F}_q$ containing two zeros $\gamma^d$ and $\gamma^{d+D}$.

Note that there is no integer $i$ such that $0\leq i \leq k-1$ and $d+D\equiv dq^i \pmod{q^k-1}$. Otherwise, the congruence $d+(q^k-1)/e\equiv dq^i \pmod{q^k-1}$ implies $q^i \equiv 1 \pmod{(q^k-1)/(de)}$, so $i=0$ and $D\equiv 0\pmod{q^k-1}$, impossible. Hence, the minimal polynomials (over $\mathbb{F}_q$) $h_d(x)$ and $h_D(x)$ of $\gamma^d$ and $\gamma^{d+D}$ have no common zero. These polynomials are 
$$h_d(x)=(x-\gamma^d)(x-\gamma^{dq})\cdots (x-\gamma^{dq^{h-1}}), \ \text{and}$$
$$h_D(x)=(x-\gamma^{d+D})(x-\gamma^{(d+D)q})\cdots (x-\gamma^{(d+D)q^{H-1}}),$$
where $h$ and $H$ are the smallest positive integers such that 
$$d(q^h-1)\equiv 0\pmod{\frac{q^k-1}{q-1}} \ \text{and} \ (d+D)(q^H-1)\equiv 0\pmod{\frac{q^k-1}{q-1}}.$$
As $d<q-1$, we have $h=k$. Moreover note that $(q^k-1,d+D)=d\left(\frac{q^k-1}{de}e,1+\frac{q^k-1}{de}\right)=d\left(e,1+\frac{q^k-1}{de}\right)$ divides $de$, so $(d+D,(q^k-1)/(q-1))\leq de\leq q-1$. Hence we also have $H=k$. Therefore, the polynomial 
$$h(x)=h_d(x)h_D(x)$$
is a polynomial of degree $2k$ and $C$ is an $[n,2k]$ linear code.

We have proved the following lemma.

\begin{lem} \label{form of C}
 Let $C_d$ and $C_D$ be the cyclic irreducible codes whose check polynomial are $h_d(x)$ and $h_D(x)$ described as above. Then both $C_d$ and $C_D$ have dimension $k$. Moreover, the code $C$ has dimension $2k$ with check polynomial $h(x)=h_d(x)h_D(x)$. Denote $\beta=\gamma^{-1}$. The codes $C_d, C_D$ and $C$ can be explicitly described as follows.
\begin{eqnarray*}
C_d&=& \{c_u=( \Tr(u), \Tr(u\beta^d), \dots, \Tr(u\beta^{d(n-1)}) ): u \in \mathbb{F}_{q^k}\},\\
C_D&=& \{c_v=( \Tr(v), \Tr(v\beta^{d+D}), \dots, \Tr(v\beta^{(d+D)(n-1)}) ): v \in \mathbb{F}_{q^k}\},\\
C&=& \{c_{u,v}=( \Tr(u+v), \dots, \Tr(u\beta^{d(n-1)}+v\beta^{(d+D)(n-1}) ): u,v \in \mathbb{F}_{q^k}\}.
\end{eqnarray*}
\end{lem}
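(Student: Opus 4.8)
The plan is to treat the three codes one at a time, first pinning down the dimension and trace parametrization of each irreducible constituent $C_d$ and $C_D$, and then assembling $C$ from them using the coprimality of the two check polynomials.

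For the dimensions, recall that the dimension of a cyclic code equals the degree of its check polynomial. By definition $C_d$ (respectively $C_D$) is the irreducible cyclic code whose check polynomial is $h_d(x)$ (respectively $h_D(x)$), and in the paragraph preceding the lemma we already showed $\deg h_d = h = k$ and $\deg h_D = H = k$. Hence both $C_d$ and $C_D$ have dimension $k$. For the trace descriptions of these two codes I would simply invoke the standard trace representation of an irreducible cyclic code recorded in the Definition above. The polynomial $h_d(x)$ is the minimal polynomial of the root $\alpha=\gamma^d$; setting $\delta=\alpha^{-1}=\gamma^{-d}=\beta^d$, the Definition yields at once
$$C_d=\{c_u=(\Tr(u),\Tr(u\beta^d),\dots,\Tr(u\beta^{d(n-1)})):u\in\mathbb{F}_{q^k}\}.$$
The identical argument applied to $\alpha=\gamma^{d+D}$ and $\delta=\beta^{d+D}$ produces the stated description of $C_D$.

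To obtain $C$, I would exploit the fact, established just before the lemma, that $h_d(x)$ and $h_D(x)$ have no common zero, so $\gcd(h_d,h_D)=1$ and $\operatorname{lcm}(h_d,h_D)=h_d h_D=h(x)$. Under the correspondence between cyclic codes of length $n$ and divisors of $x^n-1$, the intersection $C_d\cap C_D$ has check polynomial $\gcd(h_d,h_D)=1$ and is therefore the zero code, while the sum $C_d+C_D$ has check polynomial $\operatorname{lcm}(h_d,h_D)=h(x)$ and is therefore exactly $C$. Thus $C=C_d\oplus C_D$, whence $\dim C=\dim C_d+\dim C_D=2k$, in agreement with $\deg h=2k$. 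Since the sum is direct, every codeword of $C$ is uniquely $c_u+c_v$ with $c_u\in C_d$, $c_v\in C_D$, and by $\mathbb{F}_q$-linearity of the trace the $i$-th coordinate of $c_u+c_v$ is $\Tr(u\beta^{di})+\Tr(v\beta^{(d+D)i})=\Tr(u\beta^{di}+v\beta^{(d+D)i})$, which is precisely the claimed parametrization $c_{u,v}$ of $C$.

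The step I expect to require the most care is the direct-sum claim: one must justify not only the inclusion $C\supseteq C_d+C_D$ but the equality of dimensions, equivalently $C_d\cap C_D=\{0\}$. This rests entirely on the coprimality of the two minimal polynomials, so the earlier congruence computation showing that $d+D\not\equiv dq^i\pmod{q^k-1}$ for every $0\le i\le k-1$ is exactly what guarantees that the trace parametrization of $C$ by pairs $(u,v)\in\mathbb{F}_{q^k}\times\mathbb{F}_{q^k}$ is both well-defined and faithful, giving $q^{2k}$ distinct codewords as required.
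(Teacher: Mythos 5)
Your proposal is correct and takes essentially the same route as the paper, whose proof of this lemma is precisely the paragraph preceding it: the congruence argument showing $h_d$ and $h_D$ share no zero, the computations $h=H=k$ giving the dimensions, and the standard trace representation from the Definition. Your only addition is to make explicit the gcd/lcm correspondence between cyclic codes and divisors of $x^n-1$ that justifies $C=C_d\oplus C_D$ and the faithfulness of the parametrization $(u,v)\mapsto c_{u,v}$, a step the paper leaves implicit in asserting that $h(x)=h_d(x)h_D(x)$ of degree $2k$ is the check polynomial of $C$.
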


The existence of the code $C$ of length $n$ implies that $\beta^{dn}=1$, so $(q^k-1)\mid dn$. As $q^k-1 \equiv 0 \pmod{n}$, there exists a divisor $\lambda$ of $d$ such that 
$$n=\lambda \frac{q^k-1}{d}.$$
By Lemma \ref{Cd and CD two weight}, both $C_d$ and $C_D$ are two-weight codes if $C$ is two-weight. For the time being, we assume the validity of this result, that is, the codes $C$, $C_d$ and $C_D$ are all two-weight codes. 

By $\wt(W)$ we denote the set of weights of the code $W$. The following results in \cite{SW} allow us to focus on two-weight codes over $\mathbb{F}_p$.

\begin{re} Put $n_1=(q^k-1)/d=n/\lambda$. The following code $C'_d$ is a two-weight code of length $n_1$ and  $\wt(C_d)=\lambda \wt(C'_d)$.
$$C'_d=\{ c'_u=(\Tr(u), \Tr(u\beta^d), \cdots, \Tr(u\beta^{d(n_1-1)})): u \in \mathbb{F}_{q^k} \}.$$
Define
$$n_2=\frac{n_1(q-1)}{(q-1,n_1)}=\frac{q^k-1}{((q^k-1)/(q-1),d)} \ \text{and} \ g=\left(\frac{q^k-1}{q-1},d\right).$$
The following code $C''_d$ is an irreducible cyclic code of length $n_2$.
$$C''_d=\{ c''_u=( \Tr(u), \Tr(u\beta^g), \cdots, \Tr(u\beta^{g(n_2-1)})): u \in \mathbb{F}_{q^k} \}.$$
Moreover, the code $C''_d$ is a two-weight code and 
\begin{equation} \label{weight of C''_d}
\wt(C''_d)=\frac{d}{g}\wt(C'_d)=\frac{d}{\lambda g}\wt(C_d).
\end{equation}
\end{re}


\begin{re} \label{code over Fp}
Let $\Tr_p$ denote the trace of $\mathbb{F}_{q^k}$ over $\mathbb{F}_p$ and let $\bar{C}_d$ denote the following irreducible cyclic code over $\mathbb{F}_p$.
$$\bar{C}_d=\{\bar{c}_u=(\Tr_p(u),\Tr_p(u\beta^g)...,\Tr_p(u\beta^{g(n_2-1)})): u \in \mathbb{F}_{q^k}\}.$$
Then the code $\bar{C}_d$ is two-weight and 
\begin{equation} \label{weight of bar(C)d}
\text{wt}(\bar{C}_d)=\frac{q(p-1)}{p(q-1)}\text{wt}(C''_d).
\end{equation}
Combining (\ref{weight of C''_d}) and (\ref{weight of bar(C)d}), we obtain
\begin{equation} \label{relation for weight of barC and Cd}
\wt(C_d)=\frac{\lambda gp(q-1)}{dq(p-1)} \wt(\bar{C}_d).
\end{equation}
\end{re}


Using Result \ref{code over Fp} and \cite[Corollary 3.2]{SW}, we can describe the two weights of $C_d$ in the following result.

\begin{re} \label{two weights of the code}
Denote
$$q=p^t, \ g=\left( \frac{q^k-1}{q-1},d \right), \ h=\text{ord}_g(p), \ s=\frac{kt}{h}.$$
The following are two weights of the code $C_d$.
\begin{equation} \label{two weights of Cd}
w_1=\frac{\lambda(q-1)p^{s\theta}(p^{s(h-\theta)}-\epsilon m)}{dq}, \ \ w_2=\frac{\lambda(q-1)p^{s\theta}(p^{s(h-\theta)}-\epsilon m+\epsilon g)}{dq},
\end{equation}
where $\epsilon=\pm 1$ and $m$ is a positive integer with following properties
\begin{itemize}
\item[(i)] $m\mid (g-1)$,
\item[(ii)] $mp^{s\theta}\equiv \epsilon \pmod{g}$, where $\epsilon=\pm 1$,
\item[(iii)] $m(g-m)=(g-1)p^{s(h-2\theta)}$,
\end{itemize}
and $\theta=\theta(g,p)$ is an integer defined by
$$\theta(g,p)=\frac{1}{p-1}\min\{S_p\left(\frac{j(p^h-1)}{g}\right): 1\leq j \leq g-1\},$$
where $S_p(x)$ denotes the sum of the $p$-digits of $x$.
\end{re}


The last result in this section is taken from \cite[Theorem 12]{Wolfmann}.

\begin{re} \label{projective cyclic code}
Let $n$ be a positive integer and let $q$ be a prime power such that $(n,q)=1$. Let $C$ be a two-weight projective cyclic code of length $n$ over $\mathbb{F}_q$. Assume that $C$ is not an irreducible code. Then $C$ is the direct sum of two one-weight irreducible cyclic subcodes of the same dimension and of the same unique nonzero weight $w_1$. Moreover, all irreducible cyclic subcodes of $C$ have the same weight $w_1$.
\end{re}


\section{Proof of Theorem \ref{main}}
\begin{lem} \label{C is not projective}
Define $f=((q^k-1)/(q-1),de)$. The number $B_2$ of words in the dual code $C^\perp$ of $C$ having weight $2$ is
\begin{equation} \label{number of words weight 2 in the dual code}
B_2=\left(\frac{\lambda f (q-1)}{de}-1\right)(q-1).
\end{equation}
Moreover, the code $C$ is not a projective code.
\end{lem}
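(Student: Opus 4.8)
The plan is to read off the low-weight words of $C^{\perp}$ from the two ``non-zero'' conditions that define it. A vector $b=(b_0,\dots,b_{n-1})\in\mathbb{F}_q^{n}$ is orthogonal to every $c_{u,v}$ precisely when $\sum_{j}b_j\Tr\!\left(u\beta^{dj}+v\beta^{(d+D)j}\right)=0$ for all $u,v\in\mathbb{F}_{q^k}$; pulling the $b_j\in\mathbb{F}_q$ inside the trace and using non-degeneracy of the trace form (take $v=0$, then $u=0$), this is equivalent to
\[
\sum_{j=0}^{n-1}b_j\beta^{dj}=0\qquad\text{and}\qquad\sum_{j=0}^{n-1}b_j\beta^{(d+D)j}=0 .
\]
A word of weight one has a single nonzero entry $b_{j_0}$, which would force $b_{j_0}\beta^{dj_0}=0$, impossible; hence $B_1=0$ and $C$ is non-projective if and only if $B_2\ge 1$.

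For weight two, fix a support $\{j_1,j_2\}$ with entries $a,b'\in\mathbb{F}_q^{*}$ and set $\ell=j_2-j_1$. The first identity gives $b'=-a\beta^{-d\ell}$; substituting into the second forces $\beta^{D\ell}=1$, i.e. $e\mid\ell$ because $D=(q^k-1)/e$ and $\beta$ has order $q^k-1$. The constraint $b'\in\mathbb{F}_q^{*}$ is equivalent to $\beta^{d\ell}\in\mathbb{F}_q^{*}$, i.e. $N\mid d\ell$ with $N=(q^k-1)/(q-1)$. Writing $\ell=es$ and $f=\gcd(N,de)$, the divisibility $N\mid des$ collapses to $(N/f)\mid s$, so the admissible differences are exactly the positive multiples of $M:=eN/f$. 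Since $n=\lambda(q^k-1)/d$, a short computation gives $M\mid n$ with $n/M=\lambda f(q-1)/(de)=:R$, so the admissible $\ell$ with $1\le\ell\le n-1$ are $M,2M,\dots,(R-1)M$. Each admissible difference determines $b'$ from $a$, contributing $q-1$ words, and assembling these yields $B_2=(R-1)(q-1)$, which is the stated formula.

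It remains to show $B_2\ge 1$, equivalently $R\ge 2$, equivalently that the smallest admissible difference $M=eN/f$ is a proper divisor of $n$; I expect this existence step, rather than the enumeration, to be the main obstacle. The identity $n/M=\lambda f(q-1)/(de)$ turns it into the inequality $\lambda f(q-1)\ge 2de$, and the delicate point is that $f=\gcd(N,de)$ can be as small as $1$, so one must rule out the degenerate configuration $\lambda=f=1$, $q-1=de$. A useful first simplification is $N\equiv k\pmod{q-1}$, whence $f=\gcd(N,de)=\gcd(k,de)$ since $de\mid(q-1)$; this recasts the obstacle as $\lambda\gcd(k,de)(q-1)\ge 2de$, which I would try to force from the structural constraints already invoked in the excerpt to prove $h=H=k$ (the interaction linking $d$, $e$ and $k=\ord_n(q)$). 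By comparison, the bookkeeping in the count of $B_2$ is routine.
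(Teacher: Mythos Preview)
Your enumeration of $B_2$ is essentially the paper's argument: both translate the weight-two condition in $C^{\perp}$ into the pair of divisibilities $e\mid\ell$ and $N\mid d\ell$ on the difference $\ell$ (the paper's $m$), identify the least admissible step as $M=eN/f$, and read off $(R-1)(q-1)$ from $R=n/M=\lambda f(q-1)/(de)$. Your observation $f=\gcd(k,de)$, via $N\equiv k\pmod{q-1}$, is a pleasant simplification the paper does not record.

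The genuine gap is in the non-projectivity half. You correctly isolate the residual configuration $\lambda=f=1$, $de=q-1$, but the purely arithmetic route you sketch---forcing $\lambda\gcd(k,de)(q-1)\ge 2de$ from the structural constraints behind $h=H=k$---cannot close on its own. Take $q=4$, $k=2$, $d=1$, $e=3$: then $n=15$, $de=q-1=3$, $\lambda=1$, $N=5$, $f=\gcd(5,3)=\gcd(k,de)=1$, so $R=1$ and $B_2=0$; every hypothesis of the set-up is satisfied, so no inequality in $d,e,k,\lambda$ alone will exclude this case. The paper disposes of it by a completely different mechanism that you are missing: assuming $B_2=0$, it appeals to Wolfmann's theorem on two-weight projective cyclic codes (thus tacitly using the two-weight hypothesis on $C$) to deduce that the irreducible subcode $C_d$ is a one-weight code; it then runs the first three MacWilliams identities on $C_d$, together with a separate count of weight-two words in $C_d^{\perp}$, to force $k=1$ and $n=(q-1)/d\le 2$, contradicting $n\ge 3$. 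That Wolfmann--MacWilliams step, not a sharper divisibility bound, is what resolves the degenerate case.
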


\begin{proof}
Note that there is no word in $C^\perp$ or weight $1$, as such a word induces a nonzeoro polynomial $ax^m$, $0\leq m\leq n-1$, which contains two zeros $\gamma^d$ and $\gamma^{d+D}$, impossible. Therefore, the code $C$ is projective if and only if $B_2\neq 0$.
 
\medskip 

 The number of words in $C^\perp$ having weight $2$ is equal to the number of pairs $(a_m,b_m)\in \mathbb{F}_q^{*}\times \mathbb{F}_q$ such that $1\leq m \leq n-1$ and the polynomial $a_mx^m-b_m$ contains two zeros $\gamma^d$ and $\gamma^{d+D}$. Let $N$ be the number of integers $m$ such that $1\leq m \leq n-1$ and there exists a polynomial $x^m-c_m\in \mathbb{F}_q[x]$  which contains two zeros $\gamma^d$ and $\gamma^{d+D}$. By the linearity of $C$, we have
\begin{equation} \label{relation B2 and N}
B_2=N(q-1).
\end{equation}
Note that $x^m-c_m$ has zeros $\gamma^d$ and $\gamma^{d+D}$ if and only if $\gamma^{dm}=c_m \in \mathbb{F}_q^*$ and $\gamma^{Dm}=1$. 
Hence $(q^k-1) \mid Dm$ and $(q^k-1)/(q-1)\mid dm$. 
The first condition implies $e \mid m$. 
Put  $d'=(\frac{q^k-1}{q-1},d)$. The second condition implies $\frac{q^k-1}{(q-1)d'} \mid m$. 
Thus $m$ is divisible by the following number
$$\text{lcm}\left( e,\frac{q^k-1}{(q-1)d'} \right)=\frac{(q^k-1)e}{(q-1)d'f'},$$
where $f'=(\frac{q^k-1}{(q-1)d'},e)$. We have
$$d'f'=\left( \frac{q^k-1}{q-1},ed'\right)=\left(\frac{q^k-1}{q-1},\frac{q^k-1}{q-1}e,de\right)=\left(\frac{q^k-1}{q-1},de\right)=f.$$
Therefore, $m$ is a multiple of $\frac{(q^k-1)e}{(q-1)f}=n\frac{de}{\lambda f(q-1)}$. 
The number $N$ of integers $1 \leq m \leq n-1$ which has this property is $N=\lambda f (q-1)/(de)-1$. Combining with (\ref{relation B2 and N}), we prove (\ref{number of words weight 2 in the dual code}).

\medskip

Now, assume that $C$ is projective. We have $B_2=0$, which implies 
$$de=q-1 \ \text{and} \ \lambda=f=1.$$
By Result \ref{projective cyclic code}, the irreducible subcode $C_d$ of $C$ have a unique non-zero weight $w_1$. The identities $(1)$ and $(2)$ from Result \ref{weight relation} imply
$$w_1=\frac{n(q-1)q^{k-1}}{q^k-1}=\frac{q-1}{d}q^{k-1}.$$
Note that none of words in the dual code $C_d^\perp$ of $C_d$ has weight $1$, as $\gamma^d$ cannot be zero of any nonzero polynomial $ax^m\in \mathbb{F}_q[x]$. 
Let $C_2$ be the number of words in $C_d^\perp$ having weight $2$. 
Let $M$ be the number of integers $r$ such that $1 \leq r \leq n-1$ and there exists a polynomial $x^r-c_r \in \mathbb{F}_q[x]$ which contains a zero $\gamma^d$. 
By similar reasoning as before, we obtain $C_2=M(q-1)$ and $(q^k-1)/(q-1) \mid rd$. 
As $f=((q^k-1)/(q-1),de)=1$, we have $(q^k-1)/(q-1) \mid r$. 
The number of integers $1\leq r \leq n-1$ which is a multiple of $(q^k-1)/(q-1)=nd/(q-1)$ is $(q-1)/d-1$. Thus
\begin{equation} \label{number of weight 2 words in dual Cd}
C_2=\left( \frac{q-1}{d}-1\right)(q-1).
\end{equation}
By the identity $(3)$ from Result \ref{weight relation}, we obtain
$$(q^k-1)\left(\frac{q-1}{d}\right)^2q^k=\left(\frac{(q^k-1)(q-1)}{d}\right)^2+\frac{(q^k-1)(q-1)}{d}+2(q-1)(\frac{q-1}{d}-1),$$
which implies $(q^k-1)(q-1)/d$ divides $2(q-1)((q-1)/d-1)$. This is possible only when $k=1$ and $(q-1)/d \mid 2$. We obtain $n=(q-1)/d < 3$, a contradiction.
\end{proof}

\bigskip

Since $C_d$ and $C_D$ are subcodes of $C$, they have at most two weights. In the next lemma, we prove that they cannot be one-weight codes.

\begin{lem} \label{Cd and CD two weight}
Under the same notations as above, suppose that the code $C$ is two-weight. Then both $C_d$ and $C_D$ are two-weight codes.
\end{lem}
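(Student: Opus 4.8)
The plan is to exploit that both $C_d$ and $C_D$ are subcodes of $C$: since $C$ is two-weight, every nonzero codeword of $C_d$ (resp. $C_D$) lies in $C$ and hence has one of the two weights of $C$, so each of $C_d, C_D$ has at most two distinct nonzero weights. It therefore suffices to rule out the possibility that $C_d$ or $C_D$ is a one-weight code. I will treat $C_d$ in detail and indicate at the end why $C_D$ reduces to the same computation. So suppose, for contradiction, that $C_d$ is a one-weight code whose unique nonzero weight $w_1$ is attained by all $q^k-1$ nonzero codewords.

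First I would extract $w_1$ and the number $B_2'$ of weight-$2$ words of $C_d^\perp$ purely from the MacWilliams identities of Result \ref{weight relation} applied to $C_d$ (with $m=k$). As in the proof of Lemma \ref{C is not projective}, $C_d^\perp$ has no word of weight $1$, since $\gamma^d\neq 0$ cannot be a zero of a monomial $ax^m$; hence $B_1=0$. Identity $(2)$ then gives $w_1=n(q-1)q^{k-1}/(q^k-1)$, and substituting this into identity $(3)$ yields the clean relation $2B_2'=n(q-1)(P-1)$, where $P:=n(q-1)/(q^k-1)=\lambda(q-1)/d$. Because $d\mid (q-1)$ and $e\mid (q-1)/d$ with $e\geq 2$, the number $P$ is a positive integer with $P\geq 2$.

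Next I would compute $B_2'$ directly by the counting method already used for $C_2$ in Lemma \ref{C is not projective}: a weight-$2$ word of $C_d^\perp$ arises from a binomial $x^r-c_r$ with $1\leq r\leq n-1$ having $\gamma^d$ as a zero, which forces $(q^k-1)/(q-1)\mid dr$, and scaling contributes a factor $q-1$. Writing $T=(q^k-1)/(q-1)$ and $d'=\gcd(T,d)$, the admissible $r$ are exactly the multiples of $T/d'$ in $[1,n-1]$; since $n=PT$ is itself such a multiple, their number is $Pd'-1$, so $B_2'=(Pd'-1)(q-1)$. Equating this with the MacWilliams value and dividing by $q-1$ gives the relation $2d'=T(P-1)+2/P$. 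As $T(P-1)$ and $2d'$ are integers, $2/P$ must be an integer, and with $P\geq 2$ this forces $P=2$.

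Finally I would close the case $P=2$. From $\lambda(q-1)/d=2$ together with $e\mid (q-1)/d$ and $e\geq 2$ one gets $\lambda=1$, $e=2$ and $d=(q-1)/2$, and the relation collapses to $T=2d'-1$. But $d'=\gcd(T,d)\leq d=(q-1)/2$, so $T\leq q-2$, whereas $T=1+q+\cdots+q^{k-1}\geq q+1$ when $k\geq 2$, and $T=1$ forces $n=PT=2<3$ when $k=1$; either way the hypotheses are contradicted, so $C_d$ is two-weight. For $C_D$ the entire computation runs verbatim with $d'$ replaced by $d'_D=\gcd(T,d+D)$, and the integrality step again forces $P=2$; in that subcase $d+D=d(1+T)$ and $\gcd(T,1+T)=1$ give $d'_D=\gcd(T,d)=d'$, so the identical contradiction shows $C_D$ is two-weight. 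I expect the main obstacle to be the direct evaluation of $B_2'$ and the integrality bookkeeping: one must verify that the counted binomials are indexed exactly by the multiples of $T/d'$ below $n$ (so that $n=PT$ makes the count precisely $Pd'-1$), and then see that reducing to $2/P\in\mathbb{Z}$ is the correct lever to pin down $P=2$ before the elementary size comparison $T\geq q+1$ finishes the argument.
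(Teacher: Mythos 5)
Your strategy differs from the paper's in a way that cannot be repaired: you attempt to derive a contradiction from the single assumption that $C_d$ is a one-weight irreducible cyclic code, using only the MacWilliams identities for $C_d$ together with a direct count of the weight-$2$ words of $C_d^\perp$. The hypothesis that $C$ is two-weight enters only to bound the number of weights of the subcode, never quantitatively. But one-weight codes $C_d$ do occur under the hypotheses of the theorem --- for instance $d=\lambda=1$, $e=2$, $q$ odd gives $n=q^k-1$ and $C_d=\{(\Tr(u\beta^j))_{j}\}$, every nonzero word of which has weight $(q-1)q^{k-1}$ --- so an argument that "refutes" the one-weight possibility using data of $C_d$ alone must contain an error. (The paper's proof genuinely uses the code $C$: it feeds the value $w_1=\mu q^{k-1}$ of the putative one-weight subcode into the MacWilliams identities of the $[n,2k]$ code $C$ together with $B_2(C^\perp)$ from Lemma \ref{C is not projective}, obtains the divisibility $n\mid\left(2\lambda f(q-1)/(de)-2\right)$, and derives the contradiction from that.)

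The concrete error is your evaluation of $B_2'$. A weight-$2$ word of the cyclic code $C_d^\perp$ has the form $ax^i+bx^j$ with $0\le i<j\le n-1$, and while the membership condition depends only on $m=j-i$, each admissible difference $m$ is realized by $n-m$ position pairs $\{i,j\}$, not only by $\{0,m\}$. Summing $n-m$ over the admissible $m$ (the positive multiples of $T/d'$ below $n$, with $T=(q^k-1)/(q-1)$ and $d'=\gcd(T,d)$) gives $B_2'=\tfrac{n}{2}\left(Pd'-1\right)(q-1)$, not $(Pd'-1)(q-1)$. Equating the corrected count with the MacWilliams value $2B_2'=n(q-1)(P-1)$ (that identity of yours is right) yields simply $d'=1$ --- no contradiction, and in fact precisely the classical criterion for such an irreducible cyclic code to be one-weight, consistent with the example above. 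So the pivot "$2d'=T(P-1)+2/P$, hence $P=2$" rests on an undercount by a factor of $n/2$, and the argument collapses once it is repaired. (You inherited this enumeration from the proof of Lemma \ref{C is not projective}, where the same issue deserves scrutiny; also, your reduction of the $C_D$ case uses $d+D=d(1+T)$, which holds only when $de=q-1$.) To prove the lemma you must, as the paper does, exploit the two-weight structure of $C$ itself.
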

\begin{proof}
We prove by contradiction. Suppose that either $C_d$ or $C_D$ is one-weight. Assume that is $C_d$. Note that there is no word in the dual code of $C_d$ having weight $1$. Let $w_1=\text{wt}(C_d)$. By the equation $(2)$ of Result \ref{weight relation}, we obtain $(q^k-1)w_1=n(q-1)q^{k-1}$. Hence
\begin{equation} \label{first weight}
w_1=\mu q^{k-1}, \ \text{where} \ \mu=\frac{\lambda (q-1)}{d} \mid (q-1).
\end{equation}
Note that $w_1$ is also one weight of $C$. Next, we apply the MacWilliams identities again to find the other weight $w_2$ of $C$. Recall that $A_1$ and $A_2$ be the numbers of words in $C$ of weights $w_1$ and $w_2$. Moreover, the numbers $B_1$ and $B_2$ denote the numbers of words in $C^\perp$ of weights $1$ and $2$. Note that $B_1=0$ and the value of $B_2$ is given in (\ref{number of words weight 2 in the dual code}). By Result \ref{weight relation}, we have the following identities for the $[n,2k]$ cyclic code $C$.
\begin{itemize}
\item[(1)] $A_1+A_2=q^{2k}-1$.
\item[(2)] $A_1w_1+A_2w_2=n(q-1)q^{2k-1}$.
\item[(3)] $A_1w_1^2+A_2w_2^2=\left( n^2(q-1)^2+n(q-1)+2\left(\frac{\lambda f(q-1)}{de}-1\right)(q-1) \right) q^{2k-2}$.
\end{itemize}
As $(A_1w_1+A_2w_2)(w_1+w_2)-(A_1+A_2)w_1w_2=A_1w_1^2+A_2w_2^2$, we obtain
 \begin{equation} \label{key equation}
nq(w_1+w_2)-\frac{(q^{2k}-1)w_1w_2}{(q-1)q^{2k-2}}=n^2(q-1)+n+2\frac{\lambda f(q-1)}{de}-2. 
 \end{equation}
Note that $w_1=\mu q^{k-1}$ with $\mu \mid (q-1)$, by (\ref{first weight}). The equation (\ref{key equation}) implies that $w_2=\alpha q^{k-1}$ for some $\alpha \in \mathbb{Z}^+$. In (\ref{key equation}) using $(q^k-1)\mu/(q-1)=n$, we obtain
$$nq^k(\mu+\alpha)-n(q^k+1)\alpha=n^2(q-1)+n+2\frac{\lambda f(q-1)}{de}-2,$$
which implies $n \mid (2\lambda f(q-1)/(de)-2)$. By Lemma \ref{C is not projective}, the number $2\lambda f(q-1)/(de)-2$ is nonzero, as $B_2 \neq 0$. Thus
$$n<2\frac{\lambda f(q-1)}{de}\leq 2\lambda (q-1),$$
as $f=((q^k-1)/(q-1),de)\leq de$. Since $d\leq (q-1)/e \leq (q-1)/2$, we have 
$$2\lambda\frac{q^k-1}{q-1}\leq n=\lambda \frac{q^k-1}{d} < 2\lambda (q-1),$$
which implies $k=1$. In this case, we have $f=((q^k-1)/(q-1),de)=1$ and the inequality $n<2\lambda f (q-1)/(de)$ implies
$$\lambda \frac{q-1}{d}=n < \frac{2\lambda(q-1)}{de},$$
so $e\leq de<2$, a contradiction.

\end{proof}

\bigskip

\textbf{Proof of Theorem \ref{main}}

\begin{proof}
We prove by contradiction. Suppose that $C$ is two-weight. 
Let $w_1$ and $w_2$ denote the two nonzero weights of $C$. 
By Lemma \ref{Cd and CD two weight}, both $C_d$ and $C_D$ are also two-weight. 
The equation (\ref{key equation}) implies that $q^{2k-2} \mid w_1w_2$. 
We show that the values of $w_1$ and $w_2$ defined in (\ref{two weights of Cd}) cannot satisfy this condition.
Recall that
$$w_1=\frac{\lambda(q-1)p^{s\theta}(p^{s(h-\theta)}-\epsilon m)}{dq}, \ \ w_2=\frac{\lambda(q-1)p^{s\theta}(p^{s(h-\theta)}-\epsilon m+\epsilon g)}{dq},$$
where $\epsilon=\pm 1$ and $m$ is a positive integer with following properties
\begin{itemize}
\item[(i)] $m\mid (g-1)$,
\item[(ii)] $mp^{s\theta}\equiv \epsilon \pmod{g}$, where $\epsilon=\pm 1$,
\item[(iii)] $m(g-m)=(g-1)p^{s(h-2\theta)}$,
\end{itemize}
and $\theta=\theta(g,p)$ is defined by
$$\theta(g,p)=\frac{1}{p-1}\min\{S_p\left(\frac{j(p^h-1)}{g}\right): 1\leq j \leq g-1\}.$$
Since $q^{2k-2} \mid w_1w_2$, we have $q^{2k}=p^{2kt} \mid p^{2s\theta}(p^{s(h-\theta)}-\epsilon m)(p^{s(h-\theta)}-\epsilon m+\epsilon g)$. Note that $kt=sh$, so $p^{2s(h-\theta)}$ divides $(p^{s(h-\theta)}-\epsilon m)(p^{s(h-\theta)}-\epsilon m+\epsilon g)$. The difference between $(p^{s(h-\theta)}-\epsilon m+\epsilon g)$ and $(p^{s(h-\theta)}-\epsilon m)$ is $\epsilon g$, a divisor of $(q-1)$ and not divisible by $p$. Thus, only one of the numbers $(p^{s(h-\theta)}-\epsilon m)$ or $(p^{s(h-\theta)}+\epsilon (g-m))$ is divisible by $p^{2s(h-\theta)}$.\\ 

\underline{Case 1}. $(p^{s(h-\theta)}-\epsilon m)$ is divisible by $p^{2s(h-\theta)}$.\\
Write $m=a p^{s(h-\theta)}, a \in \mathbb{Z}^+$. By (iii), we have $g-1=ap^{s\theta}(g-m)$. Note that $m \mid (g-1)$ and $p^{s\theta} \geq p \geq 2$, so $m=g-1$ and $g=1+ap^{s\theta}$. The equation (iii) again implies $h=2\theta$. Note that $h=\text{ord}_g(p)$, so $g=1+ap^{s\theta}$ divides $p^h-1=p^{2\theta}-1$. We obtain $s=1$ and $a=1$. The condition (ii) implies $\epsilon=1$. We obtain $p^{s(h-\theta)}-\epsilon m=0$ and thus $w_1=0$, a contradiction.\\

\underline{Case 2}. $(p^{s(h-\theta)}+\epsilon (g-m))$ is divisible by $p^{2s(h-\theta)}$.\\
Write $g-m=(ap^{s(h-\theta)}-\epsilon) p^{s(h-\theta)}, a \in \mathbb{Z}^+$. By (iii), we have $$g-1=(ap^{s(h-\theta)}-\epsilon)p^{s\theta}m=mp^{sh}\left( a-\frac{\epsilon}{p^{s(h-\theta)}}\right).$$ 
Note that $g \mid (p^h-1)$ and $\theta \leq h-1$, so 
$$\left(a-\frac{\epsilon}{p^s}\right) mp^{sh} \leq g-1 <p^h.$$ 
We obtain $a=m=s=\epsilon=1$ and $g-1=p^h-p^\theta$. Replacing $m=1$ into (iii), we obtain $g-1=(p^{h-\theta}-1)p^{h-\theta}$. Thus, $h=2\theta$. The condition (ii) implies $p^{\theta}\equiv 1 \pmod{g}$, contradicting with $\text{ord}_g(p)=h=2\theta$. 
\end{proof}


\begin{thebibliography}{10}


\bibitem{Ding} C.\ Ding, Y.\ Liu, C.\ Ma, L.\ Zeng: 
The weight distributions of the duals of cyclic codes with two zeros,\textit{ IEEE Trans. Inform. Theory}, \textbf{57} (2011), 8000--8006.

\bibitem{Feng} T.\ Feng: 
A characterization of two-weight projective cyclic codes, \textit{IEEE Trans. Inf. Theory}, \textbf{61} (2015), 66--71. 

\bibitem{Lint} J.\ H. \ van Lint: 
Coding Theory. Springer Lecture Notes, \textit{Berlin-Heidelberg-New
York: Springer}, \textbf{201}(1971).

\bibitem{Ma} C. Ma, L. Zeng, Y. Liu, D. Feng and C. Ding: The weight enumerator of a class of cyclic codes, \textit{IEEE Trans. Inform. Theory}, \textbf{57} (2011), 397--402.

\bibitem{mac} F. J. MacWilliams:
A theorem on the distribution of weights in a systematic code. \textit{Bell System Tech., J.} \textbf{42}(1962), 79--94.


\bibitem{Pless} V.\ Pless: 
Power moment identities on weight distributions in error-correcting codes, \textit{Inf. Contr.}, \textbf{6}(1962), 147--152.

\bibitem{SW} B.\ Schmidt, C.\ White: 
All two-weight irreducible cyclic codes?, \textit{Finite Fields Appl.} \textbf{8} (2002), 1--17.

\bibitem{Vega1} G.\ Vega: 
Two-weight cyclic codes constructed as the direct sum of two one-weight cyclic codes, \textit{Finite Fields Appl.}, \textbf{14} (2008), 785--797.

\bibitem{Vega2} G.\ Vega: 
A note about two-weight non-reducible cyclic codes, \textit{IEEE Trans. Inform. Theory}, vol. \textbf{58} (2012), 2263–2264.

\bibitem{Wang} B. Wang, C. Tang, Y. Qi, Y. Yang, M. Xu: 
The weight distributions of cyclic codes and elliptic curves, \textit{arXiv: 1109.0628v1}.

\bibitem{Wolfmann} J.\ Wolfmann: 
Are 2-weight projective cyclic codes irreducible? I\textit{EEE Trans. Inform. Theory}, \textbf{51}(2005), 733–737.

\bibitem{Xiong} M. Xiong: 
The weight distribution of a class of cyclic codes, \textit{Finite Fields Appl.} \textbf{18} (2012), 933–945.


\end{thebibliography}
\end{document}